\numberwithin{equation}{section}
\begin{document}


\newcommand{\pntwo}{\mathbb{P}^{n_2} \times \cdots \times \mathbb{P}^{n_k}}
\newcommand{\bj}{\underline{j}}
\newcommand{\bi}{\underline{i}}
\newcommand{\dep}{\operatorname{depth}}
\newcommand{\hit}{\operatorname{ht}}
\newcommand{\opdd}{\overline{P}_{d_1,d_2}}
\newcommand{\opdk}{\overline{P}_{d_1,\ldots,d_k}}
\newcommand{\op}{\overline{P}}
\newcommand{\xab}{X^{\underline{a}}Y^{\underline{b}}}
\newcommand{\xka}{X_1^{\underline{\alpha}_1}\cdots X_k^{\underline{\alpha}_k}}
\newcommand{\bb}{{\bf b}}
\newcommand{\ab}{(\underline{a},\underline{b})}
\newcommand{\alphak}{(\underline{\alpha}_1,\ldots,\underline{\alpha}_k)}
\newcommand{\abo}{(\overline{\underline{a},\underline{b}})}
\newcommand{\alphako}{\overline{(\underline{\alpha}_1,\ldots,\underline{\alpha}_k)}}
\newcommand{\opn}{\mathcal{O}_{\pr^n}}
\newcommand{\opthree}{\mathcal{O}_{\pr^3}}
\newcommand{\otfold}{\mathcal{O}_{\mathbb{T}}}
\newcommand{\Iz}{I_{\Z}}
\newcommand{\Ixp}{I_{\xp}}
\newcommand{\Z}{\mathbb{Z}}
\newcommand{\xpi}{\X_{P_i}}
\newcommand{\xq}{\X_{Q_1}}
\newcommand{\xqi}{\X_{Q_i}}
\newcommand{\xp}{\X_{P_1}}
\newcommand{\lp}{L_{P_1}}
\newcommand{\ax}{\alpha_{\X}}
\newcommand{\bx}{\beta_{\X}}
\newcommand{\kxo}{k[x_0,\ldots,x_n]}
\newcommand{\kx}{k[x_1,\ldots,x_n]}
\newcommand{\popo}{\mathbb{P}^1 \times \mathbb{P}^1}
\newcommand{\prthree}{\mathbb{P}^1 \times \mathbb{P}^1 \times \mathbb{P}^1}
\newcommand{\pr}{\mathbb{P}}
\newcommand{\pn}{\mathbb{P}^n}
\newcommand{\pnpm}{\mathbb{P}^n \times \mathbb{P}^m}
\newcommand{\pnk}{\mathbb{P}^{n_1} \times \cdots \times \mathbb{P}^{n_k}}
\newcommand{\X}{\mathbb{X}}
\newcommand{\Y}{\mathbb{Y}}
\newcommand{\N}{\mathbb{N}}
\newcommand{\M}{\mathbb{M}}
\newcommand{\Q}{\mathbb{Q}}
\newcommand{\tfold}{\mathbb{T}}
\newcommand{\Ix}{I_{\X}}
\newcommand{\pix}{\pi_1(\X)}
\newcommand{\pixt}{\pi_2(\X)}
\newcommand{\pipi}{\pi_1^{-1}(P_i)}
\newcommand{\piqi}{\pi_2^{-1}(Q_i)}
\newcommand{\qpi}{Q_{P_i}}
\newcommand{\pqi}{P_{Q_i}}
\newcommand{\pitk}{\pi_{2,\ldots,k}}
\newcommand{\Ssx}{\mathcal S_{\X}}
\newcommand{\ui}{\underline{i}}
\newcommand{\uj}{\underline{j}}
\newcommand{\ua}{\underline{\alpha}}
\newcommand{\pnr}{\mathbb{P}^{n_1}\times \cdots \times \mathbb{P}^{n_r}}
\newcommand{\uk}{\underline{k}}

\newtheorem{theorem}{Theorem}[section]
\newtheorem{corollary}[theorem]{Corollary}
\newtheorem{proposition}[theorem]{Proposition}
\newtheorem{lemma}[theorem]{Lemma}
\newtheorem{question}[theorem]{Question}
\newtheorem{problem}{Problem}
\newtheorem{conjecture}[theorem]{Conjecture}

\newenvironment{remark}
{\vspace{.15cm} \refstepcounter{theorem} \noindent{\bf Remark
\thetheorem.}} {\vspace{.15cm} }
\newenvironment{example}
{\vspace{.15cm} \refstepcounter{theorem} \noindent{\bf Example
\thetheorem.}} {\vspace{.15cm} }
\newenvironment{definition}
{\vspace{.15cm} \refstepcounter{theorem} \noindent{\bf Definition
\thetheorem.}} {\vspace{.15cm} }

\newcommand{\m}[1]{\marginpar{\addtolength{\baselineskip}{-3pt}{\footnotesize
\it #1}}}


\title{On the Hilbert functions of sets of points in $\prthree$}

\author{Elena Guardo}
\address{Dipartimento di Matematica e Informatica\\
Viale A. Doria, 6 - 95100 - Catania, Italy}
\email{guardo@dmi.unict.it}
\urladdr{http://www.dmi.unict.it/$\sim$guardo/}

\author{Adam Van Tuyl}
\address{Department of Mathematical Sciences \\
Lakehead University \\
Thunder Bay, ON P7B 5E1, Canada}
\email{avantuyl@lakeheadu.ca}
\urladdr{http://flash.lakeheadu.ca/$\sim$avantuyl/}

\keywords{points, multiprojective spaces, Hilbert function}
\subjclass{13D40,14M99}

\begin{abstract}
Let $H_X$ be the trigraded Hilbert function of a set $X$ of reduced points
in $\prthree$.  We show how to extract some geometric information about
$X$ from $H_X$.  This note generalizes a similar result of Giuffrida,
Maggioni, and Ragusa about sets of points in $\popo$.
\end{abstract}
\maketitle


\section{Introduction}

Let $K$ be an algebraically closed field with char$(K)=0$, and
suppose that $X$ is a finite set of points in the projective space
$\mathbb{P}^n$ over $K$.  The {\it Hilbert function} of $X$ is the
numerical function $H_X:\mathbb{N} \rightarrow \mathbb{N}$ defined
by $H_X(i) = \dim_K (R/I(X))_i$ where $R = K[x_0,\ldots,x_n]$ and
$I(X)$ is the homogeneous ideal associated to $X$. The Hilbert
functions of sets of points are a well studied object, see for
example \cite{GGR,GHS,GeMR,MR}. Among the questions one can ask is
the question of what geometric information about the set of
points, e.g., the number of collinear points in $X$, can be
inferred from the function $H_X$.  For work on problems of this
type, we point the readers to \cite{BGM,CM}.

Giuffrida, Maggioni, and Ragusa \cite{GuMaRa} were among the
first to consider the Hilbert function of a set of points $X$
in a multiprojective
space $\mathbb{P}^{n_1} \times \cdots \times \mathbb{P}^{n_r}$.
In this context, the associated ideal $I(X)$ is a multihomogeneous
ideal in an $\mathbb{N}^r$-graded polynomial ring $R$.  The Hilbert
function of $X$ is the function $H_X:\mathbb{N}^r \rightarrow \mathbb{N}$
defined by $H_X(\underline{i}) = \dim_K (R/I(X))_{\underline{i}}$.  Unlike the
singly graded case, our understanding of these  functions is far from complete.
Most notably, while there exists a classification for the Hilbert
functions of sets of points in $\mathbb{P}^n$ (see \cite{GeMR}),
no classification is known in the multigraded situation, including
$\popo$.  Some known results can be found in \cite{Gu2,GuMaRa,VT1}.

In this note, we introduce some new results about the Hilbert
functions of points $X$ in $\prthree$ (which can be scaled to
$\mathbb{P}^1 \times \cdots \times \mathbb{P}^1$; see Remark \ref{scale}).  
In particular,
in the spirit of \cite{BGM,CM}, we describe how geometric information
about $X$ is encoded into the Hilbert function $H_X$.  Specifically,
Theorem \ref{genGMR} shows
how the number of points on a ``line'' (see Definition
\ref{line}) in $\prthree$ is captured.
We were inspired by a similar result of Giuffrida, Maggioni,
and Ragusa \cite[Theorem 2.12]{GuMaRa} for points in $\popo$.  Our proof,
however, uses a different approach.


\section{Preliminaries}

We introduce the necessary notation and basic results concerning
sets of points in $\prthree$. Throughout, we use $\succeq$ to
denote the natural partial order on $\mathbb{N}^3$ defined by
$(i,j,k) \succeq (i',j',k')$ if and only if  $i\geq i'$, $j \geq
j'$, and $k \geq k'$. We set $R = K[x_0,x_1,y_0,y_1,z_0,z_1]$ and
induce a trigrading by setting $\deg x_i = (1,0,0)$, $\deg y_i =
(0,1,0)$ and $\deg z_i = (0,0,1)$ for $i=1,2$.

Suppose that
\[
P = [a_0:a_1] \times [b_0:b_1] \times [c_0:c_1]\in \prthree
\]
is a point in $\prthree$.  Associated to $P$ is
the trihomogeneous ideal given by
\[
I(P) = (a_1x_0 - a_0x_1,b_1y_0 - b_0y_1,c_1z_0 - c_0z_1).
\]
Given any finite set of distinct points $X = \{P_1,\ldots,P_s\}$
in $\prthree$, its associated ideal is the trihomogeneous
ideal $I(X) = \bigcap_{i=1}^s I(P_i)$.

We will sometimes write $P$ as $P = A \times B \times C$, with
$A,B,C \in \pr^1$, i.e., we will use $A$'s for the first coordinate,
and so on.  We shall sometimes write the generators
of $I(P)$ as $I(P) = (L_A,L_B,L_C)$ where $L_{A}$ is the
form of degree $(1,0,0)$, $L_B$ is the form of degree $(0,1,0)$,
and $L_C$ is the form of degree $(0,0,1)$.

For each $i=1,2,3$, let $\pi_i:\prthree \rightarrow \pr^1$ denote
the natural projection map.  Consequently, $\pi_1(X) = \{A_1,\ldots,A_{t_1}\}$,
$\pi_2(X) = \{B_1,\ldots,B_{t_2}\}$, and $\pi_3(X) = \{C_1,\ldots,C_{t_3}\}$
denote the sets of distinct first, second, and third coordinates of $X$
respectively.   We let $t_1 = |\pi_1(X)|, t_2 = |\pi_2(X)|$,
and $t_3 = |\pi_3(X)|$.

\begin{definition}
The {\it Hilbert function of $X$}, denoted $H_X$, is the
function $H_X:\mathbb{N}^3 \rightarrow \mathbb{N}$ defined by
$H_{X}(i,j,k) = \dim_K R_{i,j,k} - \dim_K I(X)_{i,j,k}.$
\end{definition}

Note that $\dim_K R_{i,j,k} = (i+1)(j+1)(k+1)$
since there are $i+1$ ways to make a monomial of degree $i$ in the
variables $x_0$ and $x_1$, $j+1$ ways to make a monomial of degree
$j$ in the $y_i$s, and $k+1$ ways to make
a monomial of degree $k$ in the $z_i$s.

Our goal is to show how $H_X$ captures
geometric information about the number of points lying
on linear subvarieties of $\prthree$.  We give a more precise
definition.

\begin{definition}\label{line}
Let $L \in R_{1,0,0}$ and $L' \in R_{0,1,0}$.  We call the variety
$\mathcal{L}$
in $\prthree$
defined by the ideal $(L,L') \subseteq R$ a {\it line of type $(1,1,0)$}.
\end{definition}

\begin{remark}
We will focus only on lines of type $(1,1,0)$, although similar results
can be proved for lines of type $(1,0,1)$ and $(0,1,1)$, which
are defined in an analogous manner.
\end{remark}

\begin{remark}
We add a few comments about
how to interpret the geometry.
One can construct an embedding of $\prthree$
into a projective space; in particular, using Segre's embedding
$\tfold = \prthree \hookrightarrow \pr^7$
using the sheaf $\otfold(1,1,1)\,$.  We can easily check that the
ideal of the image $\tfold'$ of
$\tfold$ under the embedding is:
\small
\begin{eqnarray*}
I(\tfold') & = & (u_0u_7-u_1u_6,u_0u_7-u_2u_5,u_0u_7-u_3u_4, u_0u_3-u_1u_2,
u_4u_7-u_5u_6, \\
&&u_0u_5-u_1u_4, u_2u_7-u_3u_6,u_0u_6-u_2u_4,u_1u_7-u_3u_5)
\end{eqnarray*}
\normalsize
in $K[u_0,\ldots,u_7]$.    We can view $\prthree$ as ruled
$3$-fold.  The lines of type $(1,1,0)$, respectively $(1,0,1)$ and $(0,1,1)$,
can be viewed as rulings on this surface.  Our goal is to count the number
of points on these rulings using the Hilbert function.

Alternatively, if we fix a degree $(1,0,0)$ form $L$,
we can view this as fixing a divisor of type $(1,0,0)$.  Fixing
this linear equation is equivalent to fixing a point $A$ in the first
copy of $\pr^1$.  We are then looking at points in the set
\[S_A = \{A \times B \times C \in \popo \times \mathbb{P}^1 ~|~ B \times 
C \in \popo \} \cong \popo\]
which is also isomorphic to the ruled quadric surface in $\mathbb{P}^3$.
So,
when we form a line of type $(1,1,0)$, we can view it as fixing
a point in a $\mathbb{P}^1$ and a line in a $\popo$, or equivalently, 
a ruling on the ruled quadric surface $Q$, 
and then we are counting the number of points
on this ruling on the quadric $Q$.  Note, by abuse of notation,
the divisor of type $(1,0,0)$ can be called a plane of type $(1,0,0)$,
and similarly for divisor of type $(0,1,0)$.  So, the intersection
of these two planes results in a line of type $(1,1,0)$.
\end{remark}

We now provide a number of lemmas that shall be required for our main
result.

\begin{lemma}\label{stabilize}
Let $X$ be a finite set of distinct points in $\prthree$.
Fix an integer $k \geq 0$.  Then
\[H_X(t_1-1,t_2-1,k) = H_X(i,j,k)
~~\mbox{for all $(i,j,k) \succeq (t_1-1,t_2-1,k)$}.\]
\end{lemma}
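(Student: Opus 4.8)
The plan is to deduce the two--variable stabilization from a one--variable statement and then prove the latter by a regular--element argument. The one--variable claim I would isolate is: for fixed $j,k$ one has $H_X(i,j,k)=H_X(t_1-1,j,k)$ for all $i\ge t_1-1$, together with its mirror image $H_X(i,j,k)=H_X(i,t_2-1,k)$ for all $j\ge t_2-1$. Granting these, for any $(i,j,k)\succeq(t_1-1,t_2-1,k)$ I get the chain $H_X(i,j,k)=H_X(t_1-1,j,k)=H_X(t_1-1,t_2-1,k)$, the first equality being stabilization in the first variable (valid since $i\ge t_1-1$) and the second being stabilization in the second variable applied at first coordinate $t_1-1$ (valid since $j\ge t_2-1$). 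So the whole lemma rests on stabilization in a single variable, say the first.

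To prove that, I would pick a point $A_0\in\pr^1\setminus\pi_1(X)$ (possible since $K$ is infinite and only the $t_1$ coordinates $A_1,\dots,A_{t_1}$ must be avoided) and let $L_{A_0}\in R_{1,0,0}$ be its form. The first step is to check that $L_{A_0}$ is a non--zerodivisor on $R/I(X)$: if $L_{A_0}F\in I(X)$ then $L_{A_0}F\in I(P)$ for every $P\in X$, and since each $I(P)$ is prime with $L_{A_0}\notin I(P)$ (its zero $A_0$ is not the first coordinate of $P$), we get $F\in I(P)$ for all $P$, hence $F\in I(X)$. This gives the short exact sequence
\[
0 \to (R/I(X))(-1,0,0) \xrightarrow{\,\cdot L_{A_0}\,} R/I(X) \to R/(I(X)+(L_{A_0})) \to 0,
\]
and reading the degree $(i,j,k)$ piece yields $H_X(i,j,k)-H_X(i-1,j,k)=\dim_K(R/(I(X)+(L_{A_0})))_{i,j,k}$. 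Stabilization from $i=t_1-1$ is therefore equivalent to the vanishing of this cokernel in all degrees $(i,j,k)$ with $i\ge t_1$.

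The key step is to establish that vanishing. Let $g=L_{A_1}\cdots L_{A_{t_1}}\in R_{t_1,0,0}$ be the product of the forms cutting out the distinct first coordinates. Every $P\in X$ has first coordinate some $A_m$, so the factor $L_{A_m}$ lies in the prime $I(P)$, whence $g\in I(P)$ for all $P$ and $g\in I(X)$. Now $g$ and $L_{A_0}$ are coprime in $K[x_0,x_1]$; since $L_{A_0}$ is linear, $K[x_0,x_1]/(L_{A_0})$ is a one--variable polynomial ring $K[w]$ in which the image of $g$ is $w^{t_1}$ up to a unit, so $K[x_0,x_1]/(g,L_{A_0})\cong K[w]/(w^{t_1})$ vanishes in degrees $\ge t_1$. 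Hence $(x_0,x_1)^{t_1}\subseteq(g,L_{A_0})\subseteq I(X)+(L_{A_0})$. For $i\ge t_1$ every monomial of $R_{i,j,k}$ is divisible by a degree $(t_1,0,0)$ monomial, so $R_{i,j,k}\subseteq (x_0,x_1)^{t_1}\subseteq I(X)+(L_{A_0})$, the cokernel vanishes there, and the one--variable claim follows.

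I expect the main obstacle to be the passage from the emptiness of $X\cap V(L_{A_0})$ to the \emph{sharp} degree threshold: in the multigraded setting an empty intersection controls the quotient only up to saturation, so knowing the intersection is empty does not by itself bound degrees. The device that makes the bound effective, and sharp at exactly $t_1$ rather than some larger degree, is producing the explicit element $g\in I(X)$ of degree precisely $(t_1,0,0)$ and exploiting its coprimality with $L_{A_0}$; that is the delicate point the argument must get right.
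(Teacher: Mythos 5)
Your proposal is correct and follows essentially the same route as the paper: a degree $(1,0,0)$ form missing $X$ is a non-zerodivisor on $R/I(X)$, the multiplication short exact sequence reduces stabilization in the first coordinate to vanishing of the cokernel $R/(I(X)+(L))$ in all degrees with first entry at least $t_1$, and the argument is then repeated in the second coordinate. The only divergence is in how that vanishing is justified: you construct the explicit form $g = L_{A_1}\cdots L_{A_{t_1}} \in I(X)_{t_1,0,0}$ and use coprimality with $L_{A_0}$ in $K[x_0,x_1]$ to get $(x_0,x_1)^{t_1} \subseteq I(X)+(L_{A_0})$, whereas the paper identifies $\bigoplus_i I(X)_{i,0,0}$ with the ideal of the $t_1$ points $\pi_1(X) \subseteq \mathbb{P}^1$ (whose degree-$t_1$ generator is precisely your $g$) — and your version has the minor virtue of making explicit the propagation of the vanishing from degrees $(i,0,0)$ to all degrees $(i,j,k)$, a step the paper's proof passes over tersely.
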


\begin{proof}
Let $L$ be a form of degree $(1,0,0)$ that such that no point
of $X$ lies on $L$.  Let $L'$ be a form of degree $(0,1,0)$ such
that no point of $X$ lies on $L'$.  We observe that
$L$, respectively $L'$, is a nonzerodivsor on $R/I(X)$.
Thus the multiplication map
\[\times \overline{L}: (R/I(X))_{i,j,k} \rightarrow (R/I(X))_{i+1,j,k}\]
is injective for all $(i,j,k)$.  A similar result
holds using $L'$.  Thus
\[H_X(t_1-1,t_2-1,k) \leq H_X(t_1,t_2-1,k)\leq \cdots \leq H_X(i,t_2-1,k)
~~\mbox{for $i \geq t_1-1$}\]
and
\[H_X(i,t_2-1,k) \leq
H_X(i,t_2,k) \leq \cdots \leq H_X(i,j,k)
~~\mbox{for $j \geq t_2-1$.}\]

In addition, we have the short exact sequence
\[0 \longrightarrow R/I(X)(-1,0,0)
\stackrel{\times \overline{L}}{\longrightarrow} R/I(X) \longrightarrow
R/(I(X),L) \longrightarrow 0.\]
Now $H_X(t_1-1,0,0) = H_X(t_1,0,0) = |\pi_1(X)|$ because
$\bigoplus_{i \in \mathbb{N}}I(X)_{i,0,0} \cong I(\pi_1(X)) \subseteq K[x_0,x_1]$,
i.e., the
ideal of the $t_1$ points $\pi_1(X)$ in $\pr^1$.  So, the short
exact sequence implies that $(I(X),L)_{i,0,0} = R_{i,0,0}$ for all
$i \geq t_1$.  But this means that the multiplication map $\times \overline{L}$
is also surjective for $(i,j,k)$ with $i \geq t_1-1$.  Thus
$H_X(t_1-1,t_2-1,k) = H_X(t_1,t_2-1,k) = \cdots
= H_X(i,t_2-1,k)$.

We now apply a similar argument with $L'$ to show that
$H_X(i,t_2-1,k) = H_X(i,t_2,k) = \cdots = H_X(i,j,k)$.
\end{proof}

\begin{lemma}\label{pointsonline}
Let $\mathcal{L}$ be a line of type $(1,1,0)$, and suppose that
$X$ is a finite set of distinct points in $\prthree$ such that
$X \subseteq \mathcal{L}$.  If $|X| = s$, then
\[H_{X}(i,j,k) = \min\{k+1,s\} ~~\mbox{for all $(i,j,k) \in \mathbb{N}^3$}.\]
\end{lemma}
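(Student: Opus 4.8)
The plan is to exploit the fact that every point of $X$ lies on one common line $\mathcal{L}$, so that after a change of perspective $X$ becomes a configuration of $s$ distinct points inside a single $\pr^1$. Writing $\mathcal{L}$ for the line defined by $(L,L')$ with $L \in R_{1,0,0}$ and $L' \in R_{0,1,0}$ as in Definition \ref{line}, observe that $L$ cuts out a point $A \in \pr^1$ and $L'$ a point $B \in \pr^1$, so that $L = L_A$ and $L' = L_B$ in the notation of Section 2. Every $P_m \in X$ then has the form $P_m = A \times B \times C_m$; since the $s$ points are distinct but share their first two coordinates, the third coordinates $C_1,\ldots,C_s$ must be pairwise distinct. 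In particular $L_A, L_B \in I(P_m)$ for every $m$, and hence $L_A, L_B \in I(X) = \bigcap_m I(P_m)$.

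First I would pass to the quotient by $(L_A, L_B)$. Because $L_A$ is a nonzero linear form in $x_0, x_1$ and $L_B$ a nonzero linear form in $y_0, y_1$, there is a trigraded isomorphism $R/(L_A, L_B) \cong K[u, v, z_0, z_1]$, where $u, v, z_0, z_1$ carry degrees $(1,0,0), (0,1,0), (0,0,1), (0,0,1)$, respectively. Under this map the image of each $L_{C_m}$ is a nonzero linear form in $z_0, z_1$, and distinct $C_m$ yield pairwise non-proportional, hence coprime, forms.

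The crux is then to establish that $I(X) = (L_A, L_B, F)$, where $F = \prod_{m=1}^{s} L_{C_m}$ has degree $(0,0,s)$. The inclusion $\supseteq$ is immediate, since $L_A, L_B \in I(P_m)$ and $F \in (L_{C_m}) \subseteq I(P_m)$ for each $m$. For $\subseteq$, I would work in $K[u, v, z_0, z_1]$, where $I(X)/(L_A, L_B)$ is the intersection of the principal ideals generated by the images of the $L_{C_m}$; since $K[u, v, z_0, z_1]$ is a UFD and these images are pairwise coprime irreducibles, their intersection is the principal ideal generated by their product, namely the image of $F$. Pulling back gives the claimed equality.

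It then remains to read off the Hilbert function from $K[u, v, z_0, z_1]/(\overline{F})$. A monomial of degree $(i,j,k)$ in $K[u, v, z_0, z_1]$ must equal $u^i v^j z_0^a z_1^b$ with $a + b = k$, so this graded piece has dimension $k+1$, independently of $i$ and $j$. Multiplication by $\overline{F}$ is injective and raises the third degree by $s$, so the degree $(i,j,k)$ part of $(\overline{F})$ has dimension $k - s + 1$ for $k \geq s$ and $0$ otherwise; subtracting yields $H_X(i,j,k) = \min\{k+1, s\}$. The step demanding the most care is the identification $I(X) = (L_A, L_B, F)$, i.e. checking that the intersection of the $I(P_m)$ produces no generators beyond $(L_A, L_B)$ and $F$; the UFD argument settles this, but it depends essentially on the $C_m$ being distinct, which is exactly where the hypothesis that $X$ is a set of distinct points on $\mathcal{L}$ enters.
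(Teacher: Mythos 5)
Your proposal is correct and follows essentially the same route as the paper: identify $I(X)$ as the ideal generated by the two linear forms cutting out $\mathcal{L}$ together with the product $F$ of the $s$ forms of degree $(0,0,1)$, pass to the four-variable quotient ring, and compute the Hilbert function from multiplication by $F$ (your injectivity argument is the paper's short exact sequence in disguise). The only difference is cosmetic: the paper normalizes coordinates so that $L_A = x_1$, $L_B = y_1$ and asserts the equality $\bigcap_m (x_1,y_1,L_{C_m}) = (x_1,y_1,F)$ without comment, whereas you prove that equality explicitly via the correspondence theorem and a UFD argument --- a detail worth having, but not a different approach.
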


\begin{proof}
Because $X \subseteq \mathcal{L}$, the set $X$ has the form
\[X = \{A \times B \times C_1,A \times B \times C_2, \ldots, A \times B \times
C_s \}\]
for some $A,B, C_i \in \mathbb{P}^1$.  After a change of coordinates,
we can assume that $A = [1:0]$ and $B = [1:0]$.  Thus
\[I(X) = \bigcap_{i=1}^s (x_1,y_1,L_{C_i}) = (x_1,y_1,L_{C_1}L_{C_2}\cdots L_{C_s}).\]
If we set $L = L_{C_1}\cdots L_{C_s}$, then
$R/I(X) \cong K[x_0,y_0,z_0,z_1]/(L)$;  here, we are
viewing $L$ as an element of $K[x_0,y_0,z_0,z_1]$.
If $S = K[x_0,y_0,z_0,z_1]$ is the $\mathbb{N}^3$-graded ring
with $\deg x_0 = (1,0,0)$, $\deg y_0 = (0,1,0)$ and $\deg z_i = (0,0,1)$,
then the result follows by using the short exact sequence
\[0 \longrightarrow S(0,0,-s) \stackrel{\times L}{\longrightarrow}
S \longrightarrow S/(L) \longrightarrow 0\]
to compute $H_X$ and the fact that
$\dim_K S_{i,j,k} = k+1$ for all $(i,j) \in \mathbb{N}^2$.
\end{proof}

\begin{lemma}\label{zerohilbertfunction}
Let $X$ be a finite set of distinct points in $\prthree$, and suppose that
$\mathcal{L}$ is a line of type $(1,1,0)$ that intersects
$X$, but $X \not\subseteq \mathcal{L}$.
Set $X_2 = X \cap \mathcal{L}$ and $X_1 = X \setminus X_2$.
Then
\[R_{t_1-1,t_2-1,0} = (I(X_1) + I(X_2))_{t_1-1,t_2-1,0}.\]
Consequently,
\[H_{R/(I(X_1)+I(X_2))}(t_1-1,t_2-1,k) = 0 ~~\mbox{for all $k \geq 0$}.\]
\end{lemma}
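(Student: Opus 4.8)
The plan is to first establish the displayed identity in the single tridegree $(t_1-1,t_2-1,0)$, and then bootstrap it to every tridegree $(t_1-1,t_2-1,k)$. For the bootstrap, observe that every form of tridegree $(t_1-1,t_2-1,k)$ is a $K$-linear combination of products of a form of tridegree $(t_1-1,t_2-1,0)$ (a form in the $x$'s and $y$'s alone) with a monomial of degree $k$ in $z_0,z_1$; in symbols $R_{t_1-1,t_2-1,k}=R_{t_1-1,t_2-1,0}\cdot R_{0,0,k}$. Hence once we know $R_{t_1-1,t_2-1,0}\subseteq I(X_1)+I(X_2)$, multiplying by $R_{0,0,k}$ and using that $I(X_1)+I(X_2)$ is an ideal gives $R_{t_1-1,t_2-1,k}=(I(X_1)+I(X_2))_{t_1-1,t_2-1,k}$, which is exactly $H_{R/(I(X_1)+I(X_2))}(t_1-1,t_2-1,k)=0$.

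So everything reduces to tridegree $(t_1-1,t_2-1,0)$, where forms involve only $x_0,x_1,y_0,y_1$ and are therefore insensitive to the third coordinate of a point. Write $\mathcal{L}=V(L,L')$ and let $A,B\in\pr^1$ be the common first and second coordinates of the points of $X_2$; since $\mathcal{L}$ meets $X$, we have $A\in\pi_1(X)$ and $B\in\pi_2(X)$. Every point of $X_2$ has the shape $A\times B\times C$, so a form $G$ of tridegree $(t_1-1,t_2-1,0)$ lies in $I(X_2)$ if and only if it vanishes at $A\times B$; that is, $I(X_2)_{t_1-1,t_2-1,0}$ is precisely the kernel of evaluation at $A\times B$, a subspace of codimension one in $R_{t_1-1,t_2-1,0}$.

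The crux is then to produce a single form $F\in I(X_1)_{t_1-1,t_2-1,0}$ with $F(A\times B)\neq 0$: adjoining such an $F$ to the codimension-one subspace $I(X_2)_{t_1-1,t_2-1,0}$ forces $I(X_1)_{t_1-1,t_2-1,0}+I(X_2)_{t_1-1,t_2-1,0}=R_{t_1-1,t_2-1,0}$, which is the asserted identity. To build $F$, label $\pi_1(X)=\{A_1,\ldots,A_{t_1}\}$ with $A_1=A$ and $\pi_2(X)=\{B_1,\ldots,B_{t_2}\}$ with $B_1=B$, let $L_{A_i}$ (resp. $L_{B_j}$) be the form of degree $(1,0,0)$ (resp. $(0,1,0)$) vanishing at $A_i$ (resp. $B_j$), and set
\[
F=\Bigl(\prod_{i=2}^{t_1}L_{A_i}\Bigr)\Bigl(\prod_{j=2}^{t_2}L_{B_j}\Bigr),
\]
a form of tridegree $(t_1-1,t_2-1,0)$. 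I would then check the two required properties. First, $F$ vanishes on $X_1$: any $P=A'\times B'\times C'\in X_1$ satisfies $(A',B')\neq(A,B)$, so either $A'=A_i$ for some $i\geq 2$ (and the first product kills $P$), or $A'=A$ while $B'=B_j$ for some $j\geq 2$ (and the second product kills $P$). Second, $F(A\times B)\neq 0$, since $A=A_1$ is distinct from $A_2,\ldots,A_{t_1}$ and $B=B_1$ from $B_2,\ldots,B_{t_2}$, so no factor vanishes at $A\times B$.

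The main obstacle is precisely the construction and verification of $F$: one must see that the hypothesis $X\not\subseteq\mathcal{L}$ forces every point of $X_1$ to differ from the line in its first \emph{or} second coordinate, which is exactly what allows the degree-$(t_1-1,0,0)$ and degree-$(0,t_2-1,0)$ factors to annihilate $X_1$ while leaving $A\times B$ untouched. The edge cases $t_1=1$ or $t_2=1$ cause no difficulty, as an empty product is $1$, and the standing hypotheses $X_2\neq\emptyset$ and $X_1\neq\emptyset$ guarantee that $A\in\pi_1(X)$, $B\in\pi_2(X)$, and that $X_1$ is genuinely present.
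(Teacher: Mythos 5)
Your proposal is correct and follows essentially the same route as the paper: the key step in both is the auxiliary form $\prod_{i\geq 2} L_{A_i}\prod_{j\geq 2} L_{B_j}$ of tridegree $(t_1-1,t_2-1,0)$, which vanishes on $X_1$ (using $X\not\subseteq\mathcal{L}$) but not at the point of $\pr^1\times\pr^1$ underlying $X_2$, combined with the observation that $I(X_2)_{t_1-1,t_2-1,0}$ has codimension one. The only cosmetic difference is that the paper normalizes coordinates so that $\mathcal{L}=V(x_1,y_1)$ and phrases the codimension-one statement in terms of monomials (all monomials except $x_0^{t_1-1}y_0^{t_2-1}$ lie in $I(X_2)$), whereas you phrase it coordinate-freely via the evaluation functional at $A\times B$; your bootstrap to all $k$ via $R_{t_1-1,t_2-1,k}=R_{t_1-1,t_2-1,0}\cdot R_{0,0,k}$ is exactly the justification the paper leaves implicit.
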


\begin{proof}
The second statement follows from the first
since $(I(X_1)+I(X_2))_{t_1-1,t_2-1,k} = R_{t_1-1,t_2-1,k}$ for all
$k \geq 0$ if the first statement holds.

After a change of coordinates, we can assume that $\mathcal{L}$ is the
line defined by $x_1$ and $y_1$, i.e., every point on
$X_2$ has the form $[1:0] \times [1:0] \times C$ for some $C \in \mathbb{P}^1$.
As in the proof of Lemma \ref{pointsonline}, we can take
$I(X_2) = (x_1,y_1,L)$ where $L$ is some homogeneous polynomial only in
the variables $z_0$ and $z_1$.  Now for any $(i,j) \in \mathbb{N}^2$,
$I(X_2)_{i,j,0}$  contains all the monomials of degree $(i,j,0)$
except $x_0^iy_0^j$.

Now $\pi_1(X) = \{A_1,\ldots,A_{t_1-1},[1:0]\}$ and $\pi_2(X) = \{B_1,
\ldots,B_{t_2-1},[1:0]\}$.  Note that because $X \not\subseteq \mathcal{L}$,
either $t_1 \geq 2$ or $t_2 \geq 2$.
Let $L_{A_i}$ be the form of degree $(1,0,0)$
that vanishes at points of the form $A_i \times B \times C$
where $B,C \in \mathbb{P}^1$
and let $L_{B_j}$ be the form of degree $(0,1,0)$
that vanishes at all points of the form $A \times B_j \times C$
with $A,C \in \mathbb{P}^1$.
Then $H = L_{A_1}\cdots L_{A_{t_1-1}}L_{B_1}\ldots,L_{B_{t_2-1}}$ is a degree
$(t_1-1,t_2-1,0)$ 
form that vanishes at all the points of $X_1$ with $t_1-1>0$ or $t_2-1>0$, 
that is, $(t_1-1,t_2-1,0) \neq (0,0,0)$.
Thus $H \in I(X_1)_{t_1-1,t_2-1,0}$ and $H$ is not a constant.  
Furthermore, since $H$ does
not vanish at any of the points in $X_2$, neither $x_1$ nor
$y_1$ divides $H$.  So $H$ must have
the form $H = cx_0^{t_1-1}y_0^{t_2-1} + H'$ where $c\neq 0$.

Since $H \in I(X_1)_{t_1-1,t_2-1,0}$ and since all the monomials of degree
$(t_1-1,t_2-1,0)$ except $x_0^{t_1-1}y_0^{t_2-1}$ belong to $I(X_2)_{t_1-1,t_2-1,0}$,
we must have all the monomials of degree $(t_1-1,t_2-1,0)$ belonging
to $(I(X_1)+I(X_2))_{t_1-1,t_2-1,0}$, from which our conclusion follows.
\end{proof}


\section{Main Result}

In this section we prove our main result which explains how to
extract geometric information about $X$ from $H_X$.  In particular,
we are able to determine information about the number of points
on lines of type $(1,1,0)$.

We make use of the following notation.   If $X$ is a finite
set of distinct points in $\prthree$, for each integer $i \geq 1$, let
\[r_i(X) := \mbox{the number of lines of type $(1,1,0)$ that contain
exactly $i$ points of $X$}.\]
Because $X$ is finite, $r_i(X) = 0$ for all but a finite number
of $i$.   Given $H_X$, we set
\[d_{i,j,k} : = H_{X}(i,j,k) - H_{X}(i,j,k-1) ~~\mbox{for all $(i,j,k) \in
\mathbb{N}^3$}\]
where $H_X(a,b,c) =0$ if $(a,b,c) \not\succeq (0,0,0)$.
We shall be interested in the sequence
$\{d_{t_1-1,t_2-1,k} - d_{t_1-1,t_2-1,k+1}\}_{k \in \mathbb{N}}$.
It is not obvious that this is a sequence of nonnegative numbers.
However, even though one can prove this directly from the properties
of the Hilbert function,  this fact will be an immediate consequence
of our main theorem, stated below, which shows how geometric information
is encoded into the sequence.

\begin{theorem} \label{genGMR}
Let $X$ be a finite set of distinct points in $\prthree$.
Let $t_1 = |\pi_1(X)|$ and $t_2 = |\pi_2(X)|$.
For every $k \geq 0$,
\begin{eqnarray*}
r_{k+1}(X) &=& d_{t_{1}-1,t_{2}-1,k}-d_{t_{1}-1,t_{2}-1,k+1}\\
&=& 2H_{X}({t_1-1,t_2-1,k}) - H_{X}({t_1-1,t_2-1,k-1})- H_{X}({t_1-1,t_2-1,k+1}).
\end{eqnarray*}
\end{theorem}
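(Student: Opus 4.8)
The plan is to reduce everything to a single closed-form expression for the Hilbert function at the stabilized tridegree $(t_1-1,t_2-1,k)$, and then to recover $r_{k+1}(X)$ by a short finite-difference computation. Note first that the second displayed equality is nothing more than the definition of $d_{t_1-1,t_2-1,k}$ written out, so all of the content lies in proving the first equality $r_{k+1}(X) = d_{t_1-1,t_2-1,k} - d_{t_1-1,t_2-1,k+1}$. To organize the argument, let $\mathcal{L}_1,\ldots,\mathcal{L}_m$ be the distinct lines of type $(1,1,0)$ meeting $X$ and set $s_\ell = |X \cap \mathcal{L}_\ell|$. Since fixing the point $A$ of the first factor and the point $B$ of the second cuts out a unique line of type $(1,1,0)$, every point $A \times B \times C$ of $X$ lies on exactly one such line; hence these lines partition $X$ and, by definition, $r_i(X) = |\{\ell : s_\ell = i\}|$. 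The heart of the proof is the claim
\[
H_X(t_1-1,t_2-1,k) = \sum_{\ell=1}^m \min\{k+1,s_\ell\} \qquad \text{for all } k \geq 0.
\]

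I would prove this claim by induction on the number $m$ of lines meeting $X$. The base case $m=1$ is precisely Lemma \ref{pointsonline}, since then $X \subseteq \mathcal{L}_1$ with $s_1 = |X|$. For the inductive step, choose the line $\mathcal{L} = \mathcal{L}_m$, set $X_2 = X \cap \mathcal{L}$ and $X_1 = X \setminus X_2$, and observe that $X \not\subseteq \mathcal{L}$ because $m \geq 2$. From $I(X) = I(X_1) \cap I(X_2)$ one has the standard short exact sequence
\[
0 \longrightarrow R/I(X) \longrightarrow R/I(X_1) \oplus R/I(X_2) \longrightarrow R/(I(X_1)+I(X_2)) \longrightarrow 0,
\]
which, read off in each tridegree, gives $H_X = H_{X_1} + H_{X_2} - H_{R/(I(X_1)+I(X_2))}$. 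Evaluating at $(t_1-1,t_2-1,k)$, Lemma \ref{zerohilbertfunction} annihilates the correction term, while Lemma \ref{pointsonline} gives $H_{X_2}(t_1-1,t_2-1,k) = \min\{k+1,s_m\}$.

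For the remaining term I would invoke Lemma \ref{stabilize} to align the stabilized degrees: writing $t_1' = |\pi_1(X_1)| \leq t_1$ and $t_2' = |\pi_2(X_1)| \leq t_2$, stabilization of $X_1$ yields $H_{X_1}(t_1-1,t_2-1,k) = H_{X_1}(t_1'-1,t_2'-1,k)$ since $(t_1-1,t_2-1,k) \succeq (t_1'-1,t_2'-1,k)$, and the latter equals $\sum_{\ell=1}^{m-1}\min\{k+1,s_\ell\}$ by the induction hypothesis applied to $X_1$, which meets exactly the $m-1$ lines $\mathcal{L}_1,\ldots,\mathcal{L}_{m-1}$ with the same counts $s_\ell$. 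Summing the three contributions completes the induction and establishes the claim.

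Finally I would finish by differencing. Writing $f(k) := H_X(t_1-1,t_2-1,k) = \sum_\ell \min\{k+1,s_\ell\}$, with $f(-1)=0$ consistent with the stated convention, each summand satisfies $\min\{k+1,s_\ell\} - \min\{k,s_\ell\} = 1$ when $s_\ell \geq k+1$ and $0$ otherwise, so $d_{t_1-1,t_2-1,k} = |\{\ell : s_\ell \geq k+1\}| = \sum_{i \geq k+1} r_i(X)$. Subtracting the analogous expression for $k+1$ telescopes to $r_{k+1}(X)$, which is exactly the desired identity. The main obstacle is the additivity claim, and within it the vanishing of the correction term $H_{R/(I(X_1)+I(X_2))}$ at the special degree $(t_1-1,t_2-1,k)$; this is precisely what Lemma \ref{zerohilbertfunction} supplies, and the only technical care needed is the degree-alignment between the stabilization threshold of $X$ and that of the proper subset $X_1$, handled by Lemma \ref{stabilize}.
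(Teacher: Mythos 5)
Your proposal is correct and follows essentially the same route as the paper: the paper first proves (Theorem \ref{sum}, by induction on the number of lines of type $(1,1,0)$ meeting $X$, using the same decomposition $X = X_1 \sqcup X_2$, the same short exact sequence, and Lemmas \ref{pointsonline}, \ref{stabilize}, \ref{zerohilbertfunction}) a closed form for $H_X(t_1-1,t_2-1,k)$ that is exactly your $\sum_{\ell}\min\{k+1,s_\ell\}$ written as an iterated sum of the $r_n(X)$, and then deduces the theorem by the same telescoping difference. Your only deviations are cosmetic: you state the intermediate formula via $\min\{k+1,s_\ell\}$ rather than the paper's double sum, and you apply Lemma \ref{pointsonline} directly to $X_2$ where the paper routes through the induction hypothesis and Lemma \ref{stabilize}.
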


\begin{remark}
As we see in the above theorem, $r_{k+1}(X)$ can be computed
directly from the Hilbert function $H_X$ or through the
$d_{i,j,k}$s.  We have presented the formula for $r_{k+1}(X)$ using 
the $d_{i,j,k}$s because it is reminiscent of the formulas found in
\cite[Theorem 2.12]{GuMaRa} for points in $\popo$ which inspired this
result.
\end{remark}

Let us
illustrate this result before turning to its proof.

\begin{example}
We show how to go from the Hilbert function to information about
the set of points. Suppose that we are given the following
trigraded Hilbert function for a set of points $X$. Below, $H_X$
is written as a collection of infinite matrices, where the initial
row and column are indexed with $0$ as opposed to $1$.  You should
view these matrices as the ``layers'' of the box matrix that make
up $H_X$.
\footnotesize
\begin{eqnarray*}
H_{X}(i,j,0)&=&
\begin{bmatrix}
1&$2$&$3$&$3$&\cdots\\
2&$4$&$5$&$5$&\cdots\\
3&$5$&$6$&$6$&\cdots\\
3&$5$&$6$&$6$&\cdots\\
\vdots&\vdots&\vdots&\vdots&\ddots\\
\end{bmatrix} \quad
H_{X}(i,j,1) =
\begin{bmatrix}
2&4&5&5&\cdots\\
4&7&8&8&\cdots\\
5&8&9&9&\cdots\\
5&8&9&9&\cdots\\
\vdots&\vdots&\vdots&\vdots&\ddots\\
\end{bmatrix} \quad
H_{X}(i,j,2)  =
\begin{bmatrix}
3&6&7&7&\cdots\\
5&9&10&10&\cdots\\
6&10&11&11&\cdots\\
6&10&11&11&\cdots\\
\vdots&\vdots&\vdots&\vdots&\ddots\\
\end{bmatrix}
\end{eqnarray*}
\normalsize 
and $H_X(i,j,k) = H_X(i,j,2)$ for all $k \geq 2$.

From this Hilbert function, we can see that $t_1=t_2 = t_3 = 3$.
We can use Theorem \ref{genGMR} to
determine the number of points on each line of type $(1,1,0)$.
We need the values
of $H_X(2,2,l)$ for all $l \in \mathbb{N}$.  From $H_X$ above, we have
\[H_X(2,2,0) = 6,~~ H_X(2,2,1) = 9, ~~\mbox{and}~~ H_X(2,2,l) = 11
~~\mbox{for all $l \geq 2$}.\]
So, $d_{2,2,0} = 6,$ $d_{2,2,1} = 3,$ $d_{2,2,2} = 2$, and $d_{2,2,l} = 0$
otherwise.
By Theorem \ref{genGMR}, we then
have $d_{2,2,0} - d_{2,2,1} = 3$ lines
of type $(1,1,0)$ that contain exactly one point of $X$,
$d_{2,2,1}-d_{2,2,2} = 1$ lines of type $(1,1,0)$ that contain
exactly two points of $X$, and $d_{2,2,2}-d_{2,2,3} = 2$ lines
of type $(1,1,0)$ that contain exactly three points of $X$.

Indeed, $H_X$ is the Hilbert function of the set of points
$X$ constructed as below.
Let $A_i := [1:i] \in \pr^1,$ $B_i := [1:i] \in \pr^1$ and $C_i :=
[1:i] \in \pr^1$.
Then $X$ is the following scheme
\[X:=\{P_{111},P_{112},P_{113},P_{121},P_{122},P_{123},P_{212},
P_{211},P_{311},P_{221},P_{131}\}\] \noindent where
$P_{ijk}=A_i\times B_j\times C_k$.
If $L_{A_i}$ denotes the $(1,0,0)$
form that vanishes at all points of $\prthree$ whose first
coordinate is $A_i$, and similarly, if $L_{B_j}$ denotes the $(0,1,0)$
form that vanishes at all points whose second coordinate is $B_j$,
then $(L_{A_1},L_{B_1})$ and $(L_{A_1},L_{B_2})$ are the two lines
of type $(1,1,0)$ that contain exactly three points of $X$.
\end{example}

Theorem \ref{genGMR} will follow from the next result.

\begin{theorem}\label{sum}
Let $X$ be a finite set of distinct points in $\prthree$.
Let $t_1 = |\pi_1(X)|$ and $t_2 = |\pi_2(X)|$.  Then for
all $k \geq 0$,
\[H_X(t_1-1,t_2-1,k) = \sum_{m=1}^{k+1}\left(\sum_{n=k+1}^\infty r_n(X)\right).\]
\end{theorem}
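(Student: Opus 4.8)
The plan is to first recast the right-hand side in geometric terms and then establish the recast identity by peeling off one line at a time. For each line $\mathcal{L}$ of type $(1,1,0)$ that meets $X$, write $s_{\mathcal{L}} = |X \cap \mathcal{L}|$, so that $r_n(X)$ counts the lines with $s_{\mathcal{L}} = n$. Using the elementary identity $\min\{k+1,n\} = \sum_{m=1}^{k+1}[\,n \geq m\,]$ and interchanging the order of summation, one rewrites the stated quantity as
$$\sum_{m=1}^{k+1}\sum_{n \geq m} r_n(X) \;=\; \sum_{n \geq 1} r_n(X)\,\min\{k+1,n\} \;=\; \sum_{\mathcal{L}} \min\{k+1, s_{\mathcal{L}}\},$$
the last sum running over all lines of type $(1,1,0)$ meeting $X$. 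Thus it suffices to prove $H_X(t_1-1,t_2-1,k) = \sum_{\mathcal{L}} \min\{k+1,s_{\mathcal{L}}\}$; the geometric content is that at the stable degree $(t_1-1,t_2-1)$ each line contributes its own single-line Hilbert value independently of the others.

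I would prove this by induction on the number $p$ of lines of type $(1,1,0)$ meeting $X$. If $p = 1$ then $X$ lies on a single line, forcing $t_1 = t_2 = 1$, and Lemma \ref{pointsonline} gives $H_X(0,0,k) = \min\{k+1,|X|\}$, which is exactly the desired right-hand side. For $p \geq 2$, fix one such line $\mathcal{L}$ and set $X_2 = X \cap \mathcal{L}$ and $X_1 = X \setminus X_2$; since $p \geq 2$ we have $X \not\subseteq \mathcal{L}$. The key tool is the Mayer--Vietoris sequence
$$0 \longrightarrow R/I(X) \longrightarrow R/I(X_1)\oplus R/I(X_2) \longrightarrow R/(I(X_1)+I(X_2)) \longrightarrow 0,$$
which is exact because $I(X) = I(X_1)\cap I(X_2)$ for the disjoint reduced sets $X_1,X_2$. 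Taking dimensions in degree $(t_1-1,t_2-1,k)$ and using Lemma \ref{zerohilbertfunction} to annihilate the last term yields the exact additivity
$$H_X(t_1-1,t_2-1,k) = H_{X_1}(t_1-1,t_2-1,k) + H_{X_2}(t_1-1,t_2-1,k).$$

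By Lemma \ref{pointsonline} the second summand equals $\min\{k+1, s_{\mathcal{L}}\}$. For the first summand, set $t_1' = |\pi_1(X_1)|$ and $t_2' = |\pi_2(X_1)|$; since deleting points cannot increase the number of distinct coordinates, $(t_1-1,t_2-1) \succeq (t_1'-1,t_2'-1)$, so Lemma \ref{stabilize} (applied to $X_1$ with the same $k$) lets me replace $H_{X_1}(t_1-1,t_2-1,k)$ by $H_{X_1}(t_1'-1,t_2'-1,k)$. Because $X_1$ meets exactly the remaining $p-1$ lines, the induction hypothesis evaluates this as the sum of $\min\{k+1,s_{\mathcal{L}'}\}$ over those lines $\mathcal{L}' \neq \mathcal{L}$, and adding back the contribution of $\mathcal{L}$ closes the induction.

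The step I would treat most carefully—and which I regard as the crux—is the interaction between two different stabilization thresholds. The Mayer--Vietoris additivity becomes exact only at the threshold $(t_1-1,t_2-1)$ of the \emph{full} set $X$, since that is where Lemma \ref{zerohilbertfunction} forces the overlap term to vanish, whereas the induction hypothesis is phrased at the possibly smaller threshold $(t_1'-1,t_2'-1)$ for $X_1$. Lemma \ref{stabilize} is precisely what reconciles the two, and the accompanying bookkeeping—verifying $t_i' \leq t_i$ and confirming that $X_1$ meets exactly the $p-1$ lines other than $\mathcal{L}$ (no new line appears and $\mathcal{L}$ drops out)—is what I would check in full detail to make the induction airtight.
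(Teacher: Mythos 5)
Your proposal is correct and follows essentially the same path as the paper's own proof: induction on the number of lines of type $(1,1,0)$ meeting $X$, with the base case handled by Lemma \ref{pointsonline}, and the inductive step combining the Mayer--Vietoris sequence for $I(X_1)\cap I(X_2)$, Lemma \ref{zerohilbertfunction} to kill the overlap term, and Lemma \ref{stabilize} to reconcile the thresholds of $X_1$, $X_2$ with that of $X$. Your preliminary rewriting of the right-hand side as $\sum_{\mathcal{L}}\min\{k+1,s_{\mathcal{L}}\}$ is a harmless (and clarifying) reformulation of the same quantity, so the two arguments are the same in substance.
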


\begin{proof}
We do induction on the number of lines of type $(1,1,0)$ that
intersect with $X$.  For the base case, suppose that there is only
one line $\mathcal{L}$ of type $(1,1,0)$ that intersects $X$. In
other words, $X$ is contained on $\mathcal{L}$, whence $t_1 = t_2
=1$.  If $|X| = s$, then $r_s(X) = 1$ and $r_{s'}(X) = 0$ for all
$s' \neq s$. By Lemma \ref{pointsonline},
\[H_X(0,0,k) = \min\{k+1,s\} ~~\mbox{for all $k \geq 0$}.\]
On the other hand, note that
$\sum_{n=k+1}^\infty r_n(X) = 1$  if $k+1 \leq s$ and zero otherwise.
This implies that
\[\sum_{m=1}^{k+1}\left(\sum_{n=k+1}^\infty r_n(X)\right) = \min\{k+1,s\}.\]
So the formulas agree.

For the induction step, suppose that there are
$t >  1$ lines $\mathcal{L}_1,\ldots,
\mathcal{L}_t$ of type $(1,1,0)$ that intersect $X$.
Set $X_2 = X \cap \mathcal{L}_t$ and $X_1 = X \setminus X_2$.
We now consider the following short exact sequence
\[0 \longrightarrow R/(I(X_1) \cap I(X_2)) \longrightarrow
R/I(X_1) \oplus R/I(X_2) \longrightarrow R/(I(X_1)+I(X_2)) \longrightarrow
0.\]
From this short exact sequence we have
\footnotesize
\begin{equation}\label{ses}
H_X(t_1-1,t_2-1,k) = H_{X_1}(t_1-1,t_2-1,k) + H_{X_2}(t_1-1,t_2-1,k)
- H_{R/(I(X_1)+I(X_2))}(t_1-1,t_2-1,k)
\end{equation}
\normalsize
for all $k \geq 0$.

Since there are only $t-1$ lines of type $(1,1,0)$ that intersect
with $X_1$ and only one line of type $(1,1,0)$ that intersects
with $X_2$, the induction hypothesis gives
\[H_{X_1}(t_1(X_1)-1,t_2(X_1)-1,k) =
\sum_{m=1}^{k+1}\left(\sum_{n=k+1}^\infty r_n(X_1)\right)\]
and
\[H_{X_2}(t_1(X_2)-1,t_2(X_2)-1,k) =
\sum_{m=1}^{k+1}\left(\sum_{n=k+1}^\infty r_n(X_2)\right)\]
where $t_i(X_j) = |\pi_i(X_j)|$ for $i=1,2$ and $j=1,2$.
Because $t_1 \geq t_1(X_1),t_1(X_2)$ and $t_2 \geq t_2(X_1),t_2(X_2)$,
Lemma \ref{stabilize} then implies that
\begin{eqnarray*}
H_{X_1}(t_1-1,t_2-1,k) &=&
\sum_{m=1}^{k+1}\left(\sum_{n=k+1}^\infty r_n(X_1)\right) ~~\mbox{and} \\
H_{X_2}(t_1-1,t_2-1,k) &=&
\sum_{m=1}^{k+1}\left(\sum_{n=k+1}^\infty r_n(X_2)\right).
\end{eqnarray*}

By Lemma \ref{zerohilbertfunction},
$H_{R/(I(X_1)+I(X_2))}(t_1-1,t_2-1,k) = 0$ for all $k \geq 0$.
Furthermore, $r_n(X) = r_n(X_1)+r_n(X_2)$ for all $n \geq 1$.
Thus, \eqref{ses} gives the desired formula:
\begin{eqnarray*}
H_X(t_1-1,t_2-1,k) & = & \sum_{m=1}^{k+1}\left(\sum_{n=k+1}^\infty r_n(X_1)\right)
+ \sum_{m=1}^{k+1}\left(\sum_{n=k+1}^\infty r_n(X_2)\right) \\
& = & \sum_{m=1}^{k+1}\left(\sum_{n=k+1}^\infty (r_n(X_1)+r_n(X_2))\right)
 =  \sum_{m=1}^{k+1}\left(\sum_{n=k+1}^\infty r_n(X)\right). \\
\end{eqnarray*}
\end{proof}

\begin{example} In this example, we show how
Theorem \ref{sum} can be used to determine values of $H_X$
directly from information about the set of points $X$.

Let $A_i := [1:i] \in \pr^1,$ $B_i := [1:i] \in \pr^1$ and $C_i :=
[1:i] \in \pr^1$, and suppose that $X$ is the following scheme
\[X:=\{P_{111},P_{121},P_{122},P_{132},P_{133},
P_{211}, P_{221},P_{222},P_{232},P_{233}\}\]
where $P_{ijk}=A_i\times B_j\times C_k$.  We can visualize this
set of points as follows:
\setlength{\unitlength}{0.49mm}
\begin{center}
\begin{picture}(100,140)

\put(40,55){\line(1,0){60}}
\put(24,39){\line(1,0){60}}
\put(13,36){$C_1$}
\put(13,66){$C_2$}
\put(13,96){$C_3$}
\put(40,85){\line(1,0){60}}
\put(24,69){\line(1,0){60}}
\put(40,115){\line(1,0){60}}
\put(24,99){\line(1,0){60}}

\put(48,45){\line(0,1){80}}
\put(43,130){$A_1$}
\put(32,30){\line(0,1){80}}
\put(27,110){$A_2$}
\put(68,45){\line(0,1){80}}
\put(52,30){\line(0,1){80}}
\put(88,45){\line(0,1){80}}
\put(72,30){\line(0,1){80}}

\put(21,29){\line(1,1){35}}
\put(18,21){$B_1$}
\put(38,21){$B_2$}
\put(58,21){$B_3$}
\put(41,29){\line(1,1){35}}
\put(61,29){\line(1,1){35}}

\put(21,59){\line(1,1){35}}
\put(41,59){\line(1,1){35}}
\put(61,59){\line(1,1){35}}

\put(21,89){\line(1,1){35}}
\put(41,89){\line(1,1){35}}
\put(61,89){\line(1,1){35}}

\multiput(48,55)(20,0){2}{\circle*{4}}
\multiput(68,85)(20,0){2}{\circle*{4}}
\multiput(88,115)(20,0){1}{\circle*{4}}
\multiput(32,39)(20,0){2}{\circle*{4}}
\multiput(52,69)(20,0){2}{\circle*{4}}
\multiput(72,99)(20,0){1}{\circle*{4}}
\end{picture}
\end{center}
Note that in the above picture, the vertical lines represent
the lines of type $(1,1,0)$.

From $X$, we can compute $r_i(X)$ for all $i$.  First, let
$L_{A_i}$ denote the $(1,0,0)$ form that vanishes at all points of
$\prthree$ whose first coordinate is $A_i$, and similarly, let
$L_{B_j}$ denote the $(0,1,0)$ form that vanishes at all points
whose second coordinate is $B_j$. Then the line of type $(1,1,0)$
given by $(L_{A_1},L_{B_1})$ contains one point of $X$, the line
of type $(1,1,0)$ given by $(L_{A_1},L_{B_2})$ contains two points
of $X$, the line of type $(1,1,0)$ given by $(L_{A_1},L_{B_3})$
contains two points,  the line of type $(1,1,0)$ given by
$(L_{A_2},L_{B_1})$ contains one point, the line of type $(1,1,0)$
given by $(L_{A_2},L_{B_2})$ contains two points, and the line of
type $(1,1,0)$ given by $(L_{A_2},L_{B_3})$ contains two points.

From this data, and from $X$, we have $t_1 = 2$ and $t_2 = 3$,
and $r_1(X) = 2$, $r_2(X) = 4$, and $r_i(X) = 0$ for $i \geq 3$.
So,
\begin{eqnarray*}
H_X(1,2,0) &= &r_1(X) + r_2(X) = 6 \\
H_X(1,2,1) &= &r_1(X) + r_2(X) + r_2(X) = 10 \\
H_X(1,2,2) &=& r_1(X) + r_2(X) + r_2(X)  = 10 = H_X(1,2,l)
~~\mbox{for $l \geq 2$.}
\end{eqnarray*}
Note that by Lemma \ref{stabilize}, we have actually computed an
infinite number of values of $H_X$.  For example, for all $(i,j,2)
\succeq (1,2,2)$, we have $H_X(i,j,2) = H_X(1,2,2) =10$.

As a reminder, a similar result to Theorem \ref{sum} also holds
for the number of points on a line of type $(1,0,1)$ and $(0,1,1)$.  Since
$t_3 = 3$, we can also compute the values of $H_X(t_1-1,k,t_3-1)$ and
$H_X(k,t_2-1,t_3-1)$ for all $k \geq 0$.  We omit the details.
\end{example}

\begin{proof}[Proof of Theorem~\textup{\ref{genGMR}.}]
For any $k \geq 0$,
\begin{eqnarray*}
d_{t_1-1,t_2-1,k} & =& H_X(t_1-1,t_2-1,k) - H_X(t_1-1,t_2-1,k-1) \\
& = & \sum_{m=1}^{k+1}\left(\sum_{n=k+1}^\infty r_n(X)\right) -
\sum_{m=1}^{k}\left(\sum_{n=k}^\infty r_n(X)\right) =  \sum_{n=k+1}^\infty r_n(X).
\end{eqnarray*}
Thus
\[d_{t_1-1,t_2-1,k}-d_{t_1-1,t_2-1,k+1} = \sum_{n=k+1}^\infty r_n(X) - \sum_{n=k+2}^\infty
r_n(X) = r_{k+1}(X).\]
\end{proof}

\begin{remark}\label{scale}
We point out that although Theorem \ref{genGMR} is only stated
for points in $\prthree$, one can adapt the proofs to scale
this result to $\mathbb{P}^1 \times \cdots \times \mathbb{P}^1$ ($r$ copies).
In particular, one defines a {\it line of type
$(\underbrace{1,\ldots,1}_r,0)$}
in $\mathbb{P}^1 \times \cdots \times \mathbb{P}^1$ as the
variety defined by linear forms of degree $e_1,\ldots,e_{r-1}$ where
$e_i$ is the standard basis vector of $\mathbb{N}^r$.  If $t_i = |\pi_i(X)|$
is the number of distinct $i$-th coordinates that appear in $X$,
then the number of points on a lines of type
$(1,\ldots,1,0)$ is encoded into the
sequence \[\{H_X(t_1-1,\ldots,t_{r-1}-1,k)\}_{k \in \mathbb{N}}.\]
Our result can also be seen as a generalization of \cite[Theorem 2.12]{GuMaRa}
which determined the geometric information encoded into
the sequence $\{H_X(t_1-1,k)\}_{k \in \mathbb{N}}$ when $X$ is a set
of points in $\popo$ with $t_1 = |X|$.
\end{remark}

\noindent
{\bf Acknowledgments.} The computer algebra system
CoCoA \cite{C} inspired many of the results of this paper.
The second author acknowledges the support of NSERC.  We thank the 
referee for their careful reading of the paper and their invaluable suggestions.


\end{document}